\newtheorem{theorem}{Theorem}[section]
\newtheorem{corollary}[theorem]{Corollary}
\newtheorem{lemma}[theorem]{Lemma}
\newtheorem{conjecture}[theorem]{Conjecture}
\theoremstyle{definition}
\newtheorem{definition}[theorem]{Definition}
\newtheorem{example}[theorem]{Example}
\theoremstyle{remark}
\newcommand{\p}{\ \ \circle*{7}}
\newcommand{\emp}{\ \ \circle{7}}
\newcommand{\ov}{-}
\newcolumntype{C}[1]{>{\centering\arraybackslash}p{#1}}
\newcommand{\R}{\mathbb{R}}
\begin{document}

%\title{Association and Simpson Conversion in $2 \times 2 \times 2$ Contingency Tables}

%\author{Svante Linusson \\ KTH Royal Institute of Technology \\ \texttt{linusson@math.kth.se} \and Matthew T. Stamps\thanks{corresponding author} \\ Yale-NUS College \\ \texttt{matt.stamps@yale-nus.edu.sg}}

\title[Association and Simpson Conversion]{Association and Simpson Conversion \\ in $\mathbf{2 \times 2 \times 2}$ Contingency Tables}

\author{Svante Linusson}
\address{KTH Royal Institute of Technology, Department of Mathematics, SE-100 44, Stockholm, Sweden}
\email{linusson@math.kth.se}

\author{Matthew T. Stamps}
\address{Yale-NUS College, Division of Science, 10 College Avenue West \#01-101, Singapore, 138609}
\email{matt.stamps@yale-nus.edu.sg}

\keywords{Simpson's paradox, correlation reversal, association, triangulations}
\subjclass[2010]{62P99, 52B10}

\date{\today}  

\begin{abstract}
We study a generalisation of Simpson reversal (also known as Simpson's paradox or the Yule-Simpson effect) to $2 \times 2 \times 2$ contingency tables and characterise the cases for which it can and cannot occur with two combinatorial-geometric lemmas.  We also present a conjecture based on some computational experiments on the expected likelihood of such events.  
\end{abstract}

\maketitle

%%%%%%%%%%%%%%%%%%%%%%%%%%%%%
%%%%%%%%%%%%%%%%%%%%%%%%%%%%%
%%%%%%%%%%%%%%%%%%%%%%%%%%%%%

\section{Introduction}

Simpson reversal is a phenomenon in statistics in which a common trend among different groups disappears, or even reverses, when the groups are combined.  More precisely, if $A_1$, $A_2$, and $B$ are events such that $A_1$ and $A_2$ are positively associated given $B$ and also given its complement $\overline{B}$, i.e., $$P(A_1 \cap A_2 \, | \, B) > P(A_1 \, | \, B) P(A_2 \, | \, B) \quad \text{and} \quad P(A_1 \cap A_2 \, | \, \overline{B}) > P(A_1 \, | \, \overline{B}) P(A_2 \, | \, \overline{B}),$$ then it is possible for $A_1$ and $A_2$ to be independent to each other or negatively associated marginally, independent of $B$, i.e., $$P(A_1 \cap A_2) \leq P(A_1) P(A_2).$$   Real-world examples of the paradox are well-documented, for instance in \cite{Bickel-Hammel-O'Connell} and \cite{Ross}, but it has also been observed and studied in several applications from biology, such as \cite{Julious-Mullee} and \cite{Wilcox}. 

In this paper, we consider a generalisation of Simpson reversal to trios of events with respect to a fourth.  The practicality of such a generalisation can be illustrated with the following scenario that is motivated by a recent paper in biology \cite{Eble}.  When measuring the association between two bacterial DNA loci in a sample, it is important to be aware of Simpson reversal since it is possible that the measured association might actually be the opposite if measured on two subsets of the bacteria separately, which could lead to a misinterpretation of the data. In the same manner, it is important to understand the possible misinterpretations that could occur when measuring association on three loci, sometimes called the \emph{fitness landscape} \cite{BPS}. This paper is a first step in understanding the statistical pitfalls in such investigations.

The generalisation of Simpson reversal to trios is significantly more complex than Simpson reversal for pairs since a trio of random events can satisfy a combination of mutual, marginal, and conditional associations with respect to one another whereas a pair of events can be positively or negatively associated or independent.  We ask if $A_1$, $A_2$, $A_3$, and $B$ are events such that $A_1$, $A_2$, and $A_3$ satisfy a common set of mutual, marginal, and conditional associations given $B$ and also given its complement, what sets of associations can the $A_1$, $A_2$, and $A_3$ satisfy independent of $B$?  We show that while there are many ways in which a set of associations among $A_1$, $A_2$, and $A_3$ can change (we call such instances \emph{Simpson conversions}), it is not possible for every set of associations to convert into every other set of associations.  Our main result, Theorem~\ref{thm:main}, characterises the Simpson conversions for trios of  events.  The proof extends a well-known geometric interpretation of Simpson reversal in terms of triangulations of the square to a geometric interpretation of Simpson conversion to triangulations of the cube. Our characterisation consists of two parts:  First, we establish some combinatorial-geometric lemmas (involving triangulations of the cube) to preclude certain instances of Simpson conversions.  Then, we verify experimentally instances of all the other Simpson conversions.  

The remainder of this paper is structured as follows:  In Section~\ref{S:2D}, we review a well-known example of Simpson reversal and several geometric interpretations in the literature.  In Section~\ref{S:3D}, we propose the generalisation of Simpson reversal for $2\times2\times2$ contingency tables, which we call Simpson conversion, that involves an arrangement of hyperplanes in $\mathbb{R}^8$ and the set of triangulations of the $3$-dimensional cube.  We describe the relationship between the linear forms defining said hyperplane arrangement and the triangulations in Section~\ref{S:corr}.  Section~\ref{S:switch} contains the main results of the paper, in which we characterise the $3$-dimensional analog of Simpson's reversals for $2\times2\times2$ contingency tables.  We conclude the paper with a conjecture on the frequency of Simpson conversion and some observations based on computational experiments in Section~\ref{S:amalga}. 

\section{Simpson Reversal in Two Dimensions}\label{S:2D}

Here we review a well-known example and several geometric interpretations of Simpson reversal in two dimensions.

\begin{example}\label{DP-data}
A concrete example of Simpson reversal is illustrated by the voting results for (the Senate version) of the Civil Rights Act of 1964 in the United States House of Representatives.  The votes are listed below broken down according to political party (Democrats and Republicans) and region of the country (Northern, Southern, and all states), as presented in \cite{guardian}. 
\begin{table}[ht]
\caption{House of Representatives voting results for the Civil Rights Act of 1964 according to political party among Northern states (left), Southern states (middle), or all states (right).}
\centering
\begin{tabular}{c|cc}
Northern & Yes & No \\
\hline
Democrats & 144 & 8 \\
Republicans &137 & 24 \\ 
\end{tabular}
\quad
\begin{tabular}{c|cc}
Southern & Yes & No \\
\hline
Democrats & 8 & 83 \\
Republicans & 0 & 11 \\ 
\end{tabular}
\quad
\begin{tabular}{c|cc}
All & Yes & No \\
\hline
Democrats & 152 & 91 \\
Republicans & 137 & 35 \\ 
\end{tabular}
\label{tab:2D}
\end{table}
From these tables, one can observe that a higher percentage of Democrats voted in favour of the bill in both the Northern and the Southern states (95\% and 9\% of the Democrats compared to 85\% and 0\% of the Republicans, respectively), but a higher percentage of Republicans voted in favour of the bill overall (80\% of the Republicans compared to 63\% of the Democrats).  This instance of Simpson reversal can be explained by the fact that the relationship between party and vote on the bill was significantly affected by the regions corresponding to the voters.  Indeed, most of the Southern representatives at that time were Democrats, and the vast majority of negative votes came from that region.
\end{example}

While the existence of a Simpson reversal is surprising -- even paradoxical -- at first glance, there are several geometric interpretations that illustrate why the phenomenon is not only possible, but a relatively common occurrence.  A well known geometric interpretation of Simpson reversal is as follows:  Suppose $v_1$, $v_2$, $w_1$, and $w_2$ are vectors in $\R_{+}^2$ based at the origin such that the slope of $v_i$ is greater than the slope of $w_i$ for $i = 1, 2$.  Then it is possible that the slope of $v_1 + v_2$ is less than the slope of $w_1 + w_2$ as shown in Figure~\ref{F:vectors}.  In Example \ref{DP-data} it corresponds to $v_1=(8,144), v_2=(83,8), w_1=(24,137), w_2=(11,0)$.

\begin{figure}[ht]
\centering
\begin{tikzpicture}[scale=0.8]
\draw [-,dashed,red] (3,1)--(8,7);
\draw [-,dashed,red] (5,6)--(8,7);
\draw [-,dashed,blue] (2,3)--(9,6);
\draw [-,dashed,blue] (7,3)--(9,6);
\draw [->,thick,red] (0,0)--(3,1);
\draw [->,thick,red] (0,0)--(5,6);
\draw [->,thick,blue] (0,0)--(2,3);
\draw [->,thick,blue] (0,0)--(7,3);
\draw [->,ultra thick,red] (0,0)--(8,7);
\draw [->,ultra thick,blue] (0,0)--(9,6);
\draw [->,ultra thick] (0,0)--(11,0);
\draw [->,ultra thick] (0,0)--(0,8);
\node [above left,blue] at (2,3) {$v_1$};
\node [below right,blue] at (7,3) {$v_2$};
\node [above left,red] at (5,6) {$w_1$};
\node [below right,red] at (3,1) {$w_2$};
\node [above right,blue] at (9,6) {$v_1 + v_2$};
\node [above right,red] at (8,7) {$w_1 + w_2$};
\end{tikzpicture}
\caption{A geometric illustration of Simpson reversal.}
\label{F:vectors}
\end{figure}

Another interpretation of Simpson reversal can be seen geometrically via triangulations of a square.  A real-valued function $f: P_0 \to \R$ on the vertices $P_0$ of a convex polygon $P$ induces a unique triangulation on $P$ by taking the convex hull of the set $$P_f = \{(p_1,p_2,f(p_1,p_2)) \ | \ (p_1,p_2) \in P_0\} \subseteq \mathbb{R}^3$$ and projecting its upper envelope onto $P$, provided no four points in $P_f$ are coplanar.  This process is nicely explained in \cite{DeLoera-Rambau-Santos} and illustrated for the case where $P$ is a square in Figure~\ref{F:d=2}.  

\begin{figure}[ht]
\centering
\includegraphics[scale=0.9]{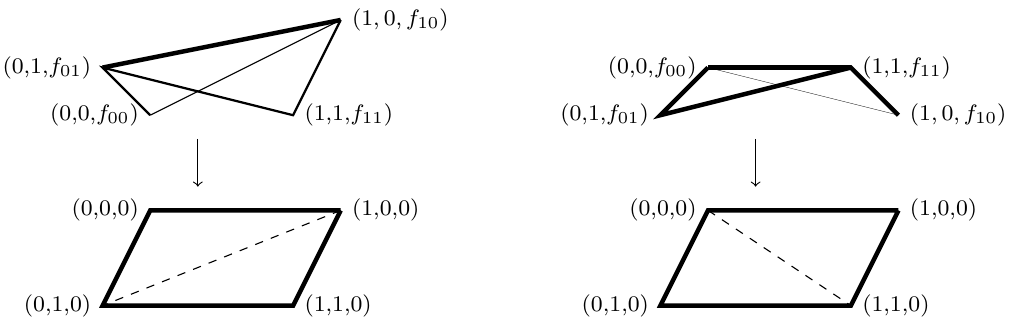}
\caption{Positive association (right) and negative association (left) induce different triangulations of the underlying square.}
\label{F:d=2}
\end{figure}

Note that a function $f : \{0,1\}^2 \to \mathbb{R}$ induces the triangulation of the square with the diagonal edge between the points $(0,0)$ and $(1,1)$ if $f(0,0)+f(1,1) > f(0,1)+f(1,0)$ and between the points $(0,1)$ and $(1,0)$ if the inequality is reversed.  Since a $2\times 2$ contingency table with values $F_{00}, F_{10},F_{01},F_{F11}$ is positively associated if $F_{00}\cdot F_{11}>F_{01}\cdot F_{10}$ and negatively associated if the inequality is reversed, the sign of the association of the contingency table is encoded by the triangulation of the square induced by the function $f : \{0,1\}^2 \to \mathbb{R}$ given by $f(x,y) = \ln (F_{xy})$.   Using upper case letters to denote the entries of a table and lower case letters to denote the corresponding function (as defined above), a pair of contingency tables $F$ and $G$ exhibit a Simpson reversal when their corresponding functions $f$ and $g$ induce the same triangulation of the square while the function $h : \{0,1\}^2 \to \mathbb{R}$ given by $h(x,y) = \ln (F_{xy}+G_{xy})$ corresponding to the table $F+G$ induces the other triangulation of the square. 

To see how such an occurrence is possible, observe that while the association of a contingency table $F$ is determined by the linear form $w := f_{00} + f_{11} - f_{01} - f_{10}$ where $f_{xy} = \ln F_{xy}$ that decomposes $\mathbb{R}^4$ into convex cones (half-spaces), the corresponding regions in the space of upper case letters, where the addition of contingency tables is done, are non-convex. Thus, the sum of two contingency tables belonging to the region of one association can belong to the other association.  It is interesting to consider what these non-convex regions look like, but that is not the focus of this paper.

\begin{example}
To illustrate the ideas above with the data in Example~\ref{DP-data}, let $N$, $S$, and $A = N+S$ denote the contingency tables in Table~\ref{tab:2D} corresponding to Northerm, Southern, and all states, respectively, and label the upper left, upper right, lower left, and lower right entires of each table by $(0,0)$, $(1,0)$, $(0,1)$, and $(1,1)$, respectively.  We can see that $N$ and $S$ are both positively associated while their sum $A$ is negatively associated since \begin{align*} N_{00} \cdot N_{11} = 144 \cdot 24 &> 137 \cdot 8 = N_{01} \cdot N_{10}, \\ S_{00} \cdot S_{11} = 8 \cdot 11 &> 83 \cdot 0 = S_{01} \cdot S_{10}, \text{ and} \\ A_{00} \cdot A_{11} = 152 \cdot 35 &< 91 \cdot 137 = N_{01} \cdot N_{10}. \end{align*} 
%or, equivalently, \begin{align*} n_{00} + n_{11} \approx 4.97 + 3.18 &> 4.92 + 2.08 \approx n_{01} + n_{10}, \\ s_{00} + s_{11} \approx 2.08 + 2.40 &> 4.42 - \infty \approx s_{01} + s_{10}, \text{ and} \\ a_{00} + a_{11} \approx 5.02 + 3.56 &< 4.51 + 4.92  \approx a_{01} + a_{10}\end{align*} where $n_{xy} = \ln N_{xy}$, $s_{xy} = \ln S_{xy}$, and $a_{xy} = \ln A_{xy}$.    
\end{example}

With these interpretations, the existence of Simpson reversals is not so surprising.  In fact, from the perspective of causality, see \cite{Pearl} or \cite{Pearl-Mackenzie}, it is a rather natural notion.  

\section{Associations in Three Dimensions}\label{S:3D}

The instance of Simpson reversal in the House of Representatives vote on the Civil Rights Bill of 1964 presented in Example~\ref{DP-data} (taken from \cite{guardian}) can also be observed in the Senate vote on the same bill, the results of which are listed below appended to the data in Table~\ref{tab:2D}.

\begin{table}[ht]
\caption{Voting results for the Civil Rights Act of 1964 according to legislative chamber and political party among Northern states (left), Southern states (middle), or all states (right).}
\centering
\begin{tabular}{c|c|cc}
\multicolumn{2}{c|}{Northern States} & Yes & No \\
\hline
 \multirow{2}{3em}{House} & Democrats & 144 & 8 \\
 & Republicans &137 & 24 \\ 
 \hline
\multirow{2}{3em}{Senate} & Democrats & 45 & 1 \\
& Republicans & 27 & 5 \\ 
\end{tabular}
\qquad
\begin{tabular}{c|c|cc}
\multicolumn{2}{c|}{Southern States} & Yes & No \\
\hline
 \multirow{2}{3em}{House} & Democrats & 8 & 83 \\
 & Republicans & 0 & 11 \\ 
 \hline
\multirow{2}{3em}{Senate} & Democrats & 1 & 20 \\
& Republicans & 0 & 1 \\ 
\end{tabular}

\bigskip
\begin{tabular}{c|c|cc}
\multicolumn{2}{c|}{All States} & Yes & No \\
\hline
 \multirow{2}{3em}{House} & Democrats & 152 & 91 \\
 & Republicans &137 & 35 \\ 
 \hline
\multirow{2}{3em}{Senate} & Democrats & 46 & 21 \\
& Republicans & 27 & 6 \\ 
\end{tabular}
\label{all_data}
\end{table}

With this, one might be interested in studying the various relationships between party, vote on the bill, and chamber of congress simultaneously by combining the two $2 \times 2$ contingency tables corresponding to each chamber of congress over each region into a $2 \times 2 \times 2$ contingency table.  Understanding how observed associations change when the data from different regions are combined becomes much more complicated than in the 2-dimensional case since there are many more notions of association for a $3$-way contingency table.  Indeed, a $2 \times 2 \times 2$ contingency table can exhibit mutual (all variables dependent on each other), marginal (two variables are dependent ignoring the third), and conditional (two variables are dependent given the third) associations, and these three types of association are distinct from one another in the sense that dependencies of one type do not necessarily imply dependencies of the other types.  More precisely, if $A_1$, $A_2$, and $A_3$ are random events, then there are eight distinct relations arising from mutual dependencies of the form $$P(X \cap Y \cap Z) \neq P(X)P(Y)P(Z);$$ three distinct relations arising from marginal dependencies of the form $$P(X \cap Y) \neq P(X)P(Y);$$ and six distinct relations arising from conditional dependencies of the form $$P(X \cap Y \, | \, Z) \neq P(X \, | \, Z)P(Y \, | \, Z)$$ where $X$, $Y$, and $Z$ are distinct events chosen from $A_1$ or $\overline{A_1}$, $A_2$ or $\overline{A_2}$, and $A_3$ or $\overline{A_3}$.  Note that there are only three and six distinct relations from marginal and conditional dependencies, respectively, since $$P(X \cap Y) \neq P(X)P(Y) \quad \Longleftrightarrow \quad P(X \cap \overline{Y}) \neq P(X)P(\overline{Y}).$$  

\begin{example}
For the data in Table~\ref{all_data}, voting ``Yes'' (Y) and being a Democrat (D) are positively marginally associated, independent of the legislative chamber, for both Northern (N) and Southern (S) states, but negatively marginally associated for all states, since \begin{align*} P(\text{Y} \cap \text{D} \, | \, \text{N}) = \frac{189}{391} &> \frac{353}{391} \cdot \frac{198}{391}  = P(\text{Y} \, | \, \text{N}) \cdot P(\text{D} \, | \, \text{N}), \\[1em] P(\text{Y} \cap \text{D} \, | \, \text{S}) = \frac{9}{124} &> \frac{9}{124} \cdot \frac{112}{124} = P(\text{Y} \, | \, \text{S}) \cdot P(\text{D} \, | \, \text{S}), \text{and} \\[1em] P(\text{Y} \cap \text{D}) = \frac{198}{515} &< \frac{362}{515} \cdot \frac{310}{515} = P(\text{Y}) \cdot P(\text{D}).\end{align*}  Therefore, the marginal association between voting ``Yes'' and being a Democrat, independent of the legislative chamber, in Table~\ref{all_data} exhibits a Simpson reversal.  We leave it to the reader to check that all of the other marginal and mutual, but only two of the conditional, associations in Table~\ref{all_data} exhibit Simpson reversals.
\end{example}

For a $2\times 2\times 2$ contingency table with values $F_{xyz}$, these 17 dependencies give linear relations on the variables $f_{xyz}=\ln F_{xyz}$, which have a natural correspondence with the set of linear relations arising from the 74 triangulations of the 3-dimensional cube.  Just as a generic $2 \times 2$ contingency table induces a triangulation of the square, a generic $2 \times 2 \times 2$ contingency table induces a triangulation of the cube (into tetrahedra) via a projection of the upper envelope of the convex hull of the points $(x,y,z,f_{xyz})$ in $\mathbb R^4$.  \cite{BPS} showed that the contingency tables that induce each triangulation of the cube are determined by 20 linear relations, which we list in Appendix~A.  The conditional associations correspond to the forms labeled $a$ through $f$; the marginal associations correspond to the sums $g+h$, $i+j$, and $k+l$; and the mutual associations correspond to the forms labeled $m$ through $t$.  The triangulations of the cube are listed in Appendix B, organised into six types according to symmetry -- every triangulation of a given type can be obtained from any other triangulation of that type via a rotation or reflection.  Triangulations of Type I consist of five tetrahedra while all other triangulations consist of six.  We have used the same notation as in \cite{BPS} for easy comparison, and we give a detailed explanation of how to interpret the figures in Appendix~B in Example~\ref{example:cube-notation}. 

\section{The Correspondence Between Linear Forms and Triangulations}\label{S:corr}

In this section, we describe the algebro-geometric correspondence between the linear forms in Appendix~A and the triangulations of the 3-dimensional cube in Appendix~B.  The positive quadrant of $\mathbb R^8$ is divided into 74 regions by the 20 hyperplanes associated to the linear forms, where each region corresponds to a unique triangulation of the cube.  The signs of the forms a-f correspond to the face diagonals on the six squares that make up the surface of the cube; the signs of the forms g-l correspond to flipping the interior diagonal of the cube within the six rectangles formed by opposite pairs of edges in the cube (these six rectangles each slice the cube into two triangular prisms); and finally the forms m-t correspond to whether or not the interior diagonal is present when passing between the triangulations of Types I and II. 

\newpage

\begin{example}\label{example:cube-notation}
Consider the triangulation labeled 3 in Appendix B shown below:
\begin{multicols}{2}
This triangulation consists of six tetrahedra, namely $\{000,001,010,100\}$, $\{011,001,010,100\}$, $\{011,001,111,100\}$, $\{101,001,111,100\}$, $\{011,010,111,100\}$, and $\{110,010,111,100\}$, where the notation $xyz$ means $(x,y,z)$.  \smallskip
\begin{center}
\begin{tikzpicture}[scale=0.9]
\put(0,0) 
{\node(a1) at (5,2){\p};
\node(a2) at (5,6){\emp};
\node(a3) at (1,6){\p};
\node(a4) at (1,2){\emp};
\node(a5) at (0,0){\p};
\node(a6) at (4,0){\emp};
\node(a7) at (4,4){\p};
\node(a8) at (0,4){\emp};
\node [right] at (a1) {\tiny 100};
\node [right] at (a2) {\tiny 101};
\node [above] at (a3) {\tiny 001};
\node [above right] at (a4) {\tiny 000};
\node [left] at (a5) {\tiny 010};
\node [right] at (a6) {\tiny 110};
\node [above left] at (a7) {\tiny 111};
\node [left] at (a8) {\tiny 011};
\draw [-,ultra thick] (a1.center) --  (a2.center)--  (a3.center)--  (a4.center)--  (a5.center)--  (a6.center)--  (a7.center)--  (a8.center)--  (a5.center);
\draw [-,ultra thick] (a1.center) --  (a6.center);
\draw [-,ultra thick] (a2.center) --  (a7.center);
\draw [-,ultra thick] (a3.center) --  (a8.center);
\draw [-,ultra thick] (a1.center) --  (a4.center);
\draw [-, blue] (a1.center) -- (a7.center);
\draw [-, blue] (a1.center) -- (a3.center);
\draw [-, blue] (a1.center) -- (a5.center);
\draw [-, blue] (a3.center) -- (a7.center);
\draw [-, blue] (a5.center) -- (a7.center);
\draw [-, blue] (a3.center) -- (a5.center);
\draw [-, red] (a1.center) -- (a8.center);%hyperidagonal
\node at (2,-0.7) {$b,d,\ov e, \ov t$};
%\node at (2,-1.5) {$3$};
}%;
\end{tikzpicture}
\end{center}The sequence of letters $b,d,\ov e, \ov t$ below the cube indicates this triangulation is induced by the contingency tables $F$ satisfying the relations
\begin{align*}
0<b&:=f_{001}+f_{111}-f_{011}-f_{101}, \\
0<d&:=f_{010}+f_{111}-f_{110}-f_{011}, \\
0>e&:=f_{000}+f_{011}-f_{010}-f_{001}, \text{ and} \\
0>t&:=f_{010}+f_{001}+f_{111}-f_{100}-2f_{011},
\end{align*}
from Appendix A or, equivalently,
\begin{align*}
\frac{F_{001}F_{111}}{F_{011}F_{101}}&>1, \quad \frac{F_{010}F_{111}}{F_{110}F_{011}}>1, \\[0.5em]
\frac{F_{000}F_{011}}{F_{010}F_{001}}&<1, \quad \frac{F_{010}F_{001}F_{111}}{F_{100}F^2_{011}}<1. 
\end{align*}
The value on the other 16 linear forms follows from these 4. 
\end{multicols}
\end{example}

Notice that the vertices $001$, $100$, $010$, and $111$ in Triangulation 3 are each incident with three face (blue) diagonals, and the remaining four vertices are not incident with any face diagonals. Such vertices will play an important role in the next section, so we will give them a name.  

\begin{definition}  For a given triangulation of the cube, \begin{itemize} \item a vertex is called \emph{full} and marked with a filled circle in Example~\ref{example:cube-notation} and Appendix~B if it is incident with all three face diagonals belonging to the square faces that contain it; \item a vertex is called \emph{empty} and marked with an empty circle in Example~\ref{example:cube-notation} and Appendix~B if it is not incident with any of the three face diagonals belonging to the square faces that contain it. \end{itemize}
\end{definition}

One can see in Appendix B that triangulations of Types I and II have four full and four empty vertices; triangulations of Type III have two full and two empty vertices; triangulations of Type IV have no full and two empty vertices; triangulations of Type V have one full and one empty vertex; and triangulations of Type VI have two full and no empty vertices.

We conclude this section with some additional observations on the relationship between associations and triangulations of the cube:  In the triangulations of Type I and II, the face diagonals in each pair of opposite faces have opposite directions, which means that the association of any two variables in the corresponding contingency tables are dependent on the value of the third.  On the contrary, the face diagonals in each pair of opposite faces in the triangulations of Type IV and VI have the same direction, which means that the association of any two variables in the corresponding contingency tables is not dependent on the value of the third.  Triangulations of Type III have exactly one pair of opposite faces whose face diagonals have the same direction while triangulations of Type V have exactly one pair of opposite faces whose face diagonals have different directions. 
%which means the association of exactly one of the three pairs of variables is dependent on the value of the third remaining variable.

\section{Simpson Conversion in Three Dimensions}\label{S:switch}

This is the main section where we consider the question:  If a pair of $2 \times 2 \times 2$ contingency tables induce the same triangulation of the cube, is it possible that their component-wise sum induces a different triangulation?  Just as in the case of $2 \times 2$ contingency tables, it is not difficult to find an example that answers this question in the affirmative.  Unlike the case of $2 \times 2$ contingency tables, however, there are many different ways in which these instances can arise:  Instead of reversing from one (of two) triangulations to the other, it is possible to convert from one triangulation of a cube to several of the other 73.  We call a pair of $2 \times 2 \times 2$ contingency tables that induce the same triangulation $A$ of the cube whose sum induces a different triangulation $B$ of the cube a \emph{Simpson conversion from $A$ to $B$}.  Our main theorem is a characterisation of the pairs of triangulations, $A$ and $B$, for which there exists a Simpson conversion from $A$ to $B$. We proceed with some essential lemmas for the characterization.

\subsection{Setup and Essential Lemmas}

For each $x \in \{0,1\}$, we define $$\overline{x} = \begin{cases} 1 & x = 0, \\ 0 & x = 1. \end{cases}$$  We denote by $F_{v}$ the value of the  function $F : \{0,1\}^k \to \mathbb{R}_+$ on input $v \in \{0,1\}^k$, i.e., the entry in the corresponding contingency table.   We start with a lemma giving relations that follow if we have Simpson reversal happening in two dimensions.

\begin{lemma}\label{basic}
Let $F,G : \{0,1\}^2 \to \R_{+}$ and $(x,y) \in \{0,1\}^2$.  If $F_{\overline{x}y} F_{x\overline{y}} < F_{xy} F_{\overline{x}\overline{y}},$ $G_{\overline{x}y} G_{x \overline{y}} < G_{xy} G_{\overline{x}\overline{y}},$ and $$(F_{\overline{x}y}+G_{\overline{x}y}) (F_{x \overline{y}}+G_{x \overline{y}}) > (F_{xy}+G_{xy}) (F_{\overline{x}\overline{y}}+G_{\overline{x}\overline{y}}),$$ then one of the following pairs of inequalities must hold  \begin{multline*}F_{\overline{x}y} G_{xy}  >  F_{xy} G_{\overline{x}y} \ \text{ and } \ F_{x \overline{y}} G_{xy}  <  F_{xy} G_{x \overline{y}} \\ \text{or} \\ F_{\overline{x}y} G_{xy}  <  F_{xy} G_{\overline{x}y} \ \text{ and } \ F_{x \overline{y}} G_{xy} > F_{xy} G_{x \overline{y}}. \end{multline*}
\end{lemma}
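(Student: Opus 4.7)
The plan is to deduce the stated dichotomy from the single strict inequality
\[
\Phi \;:=\; \bigl(F_{\overline{x}y}G_{xy} - F_{xy}G_{\overline{x}y}\bigr)\bigl(F_{x\overline{y}}G_{xy} - F_{xy}G_{x\overline{y}}\bigr) \;<\; 0,
\]
since $\Phi<0$ is exactly the claim that the two factors have strictly opposite signs. Expanding $\Phi$ yields
\[
\Phi \;=\; F_{\overline{x}y}F_{x\overline{y}}\,G_{xy}^{2} \;-\; F_{xy}G_{xy}\bigl(F_{\overline{x}y}G_{x\overline{y}}+F_{x\overline{y}}G_{\overline{x}y}\bigr) \;+\; F_{xy}^{2}\,G_{\overline{x}y}G_{x\overline{y}},
\]
so it suffices to show that the (negative) middle term dominates the sum of the two (positive) outer terms.

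For the outer terms, I would invoke the first two hypotheses directly: replacing $F_{\overline{x}y}F_{x\overline{y}}$ by $F_{xy}F_{\overline{x}\overline{y}}$ and $G_{\overline{x}y}G_{x\overline{y}}$ by $G_{xy}G_{\overline{x}\overline{y}}$ strictly increases the sum, yielding
\[
F_{\overline{x}y}F_{x\overline{y}}\,G_{xy}^{2} + F_{xy}^{2}\,G_{\overline{x}y}G_{x\overline{y}} \;<\; F_{xy}G_{xy}\bigl(F_{\overline{x}\overline{y}}G_{xy}+F_{xy}G_{\overline{x}\overline{y}}\bigr).
\]
For the middle term, I would expand the third hypothesis, isolate the four mixed cross-terms on each side, and use the strictness of the first two hypotheses to absorb the pure $F$- and pure $G$-terms. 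This produces the strict inequality
\[
F_{\overline{x}y}G_{x\overline{y}} + F_{x\overline{y}}G_{\overline{x}y} \;>\; F_{\overline{x}\overline{y}}G_{xy} + F_{xy}G_{\overline{x}\overline{y}},
\]
and multiplying by the positive factor $F_{xy}G_{xy}$ gives a lower bound on the middle term of $\Phi$ that strictly exceeds the upper bound on the outer terms from the previous display. Combining the two bounds gives $\Phi<0$.

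The computation itself is routine algebra, so I do not anticipate a genuine obstacle. The one point that requires care is bookkeeping of strictness: each of the three hypotheses is strict, and it is precisely this strictness that propagates through both bounds and into the final $\Phi<0$, thereby excluding the boundary cases in which one of the two inequalities in the stated dichotomy would degenerate to equality.
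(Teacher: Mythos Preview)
Your proposal is correct and is essentially the same argument as the paper's: both reduce the dichotomy to the strict sign condition on the product $(F_{\overline{x}y}G_{xy}-F_{xy}G_{\overline{x}y})(F_{x\overline{y}}G_{xy}-F_{xy}G_{x\overline{y}})$, derive the cross-term inequality $F_{\overline{x}y}G_{x\overline{y}}+F_{x\overline{y}}G_{\overline{x}y}>F_{\overline{x}\overline{y}}G_{xy}+F_{xy}G_{\overline{x}\overline{y}}$ from the third hypothesis after cancelling with the first two, and then apply the first two hypotheses once more to finish. The only difference is packaging: the paper starts from the expanded third hypothesis and factors at the end, whereas you start from the expanded product $\Phi$ and bound its pieces---the algebra and the use of each hypothesis coincide.
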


\begin{proof}
It suffices to prove this for the case where $x = y = 0$.  Suppose $F_{10} F_{01} < F_{00} F_{11}$, $G_{10} G_{01} < G_{00} G_{11}$, and $(F_{10}+G_{10}) (F_{01}+G_{01}) > (F_{00}+G_{00}) (F_{11}+G_{11})$.  Expanding the products in the third inequality and multiplying each side by $F_{00}G_{00}$, we get that 
\begin{align*} & F_{00}F_{10}F_{01}G_{00} + F_{00}F_{10}G_{00}G_{01} + F_{00}F_{01}G_{00}G_{10} + F_{00}G_{00}G_{10}G_{01} \\ &> F_{00}F_{00}F_{11}G_{00} + F_{00}F_{00}G_{00}G_{11} + F_{00}F_{11}G_{00}G_{00} + F_{00}G_{00}G_{00}G_{11}.\end{align*}
Applying the first and second inequalities to the first and fourth summands, we get that $$F_{00}F_{10}G_{00}G_{01} + F_{00}F_{01}G_{00}G_{10} > F_{00}F_{00}G_{00}G_{11} + F_{00}F_{11}G_{00}G_{00},$$ and hence $(F_{10}G_{00} - F_{00}G_{10})(F_{00}G_{01}-F_{01}G_{00}) > 0.$  The desired result follows immediately.  
\end{proof}
 
The next lemma says that if two $2 \times 2 \times 2$ contingency tables, $F$ and $G$, both have a full vertex at $xyz$, then the sum $F+G$ cannot have an empty vertex at $xyz$.  That is, if all three face diagonals are incident to vertex $xyz$ in both $F$ and $G$, then they cannot all flip in the sum $F+G$.

\begin{lemma}\label{advanced}
Let $F,G : \{0,1\}^3 \to \R_{+}$ and $(x,y,z) \in \{0,1\}^3$ .  If each of the following six inequalities hold \begin{align*} 
F_{\overline{x}yz} F_{x\overline{y}z} < F_{xyz} F_{\overline{x}\overline{y}z}, 
&\quad 
G_{\overline{x}yz} G_{x\overline{y}z} < G_{xyz} G_{\overline{x}\overline{y}z},  \\ 
F_{\overline{x}yz} F_{xy\overline{z}} < F_{xyz} F_{\overline{x}y\overline{z}}, 
&\quad 
G_{\overline{x}yz} G_{xy\overline{z}} < G_{xyz} G_{\overline{x}y\overline{z}}, \\ 
F_{x\overline{y}z} F_{xy\overline{z}} < F_{xyz} F_{x\overline{y}\overline{z}}, 
&\quad
G_{x\overline{y}z} G_{xy\overline{z}} < G_{xyz} G_{x\overline{y}\overline{z}},  \end{align*} 
then it is not possible for all three of the following three inequalities to hold
\begin{align*} 
(F_{\overline{x}yz}+G_{\overline{x}yz}) (F_{x\overline{y}z}+G_{x\overline{y}z}) &> (F_{xyz}+G_{xyz}) (F_{\overline{x}\overline{y}z}+G_{\overline{x}\overline{y}z}), \\
(F_{\overline{x}yz}+G_{\overline{x}yz}) (F_{xy\overline{z}}+G_{xy\overline{z}}) &> (F_{xyz}+G_{xyz}) (F_{\overline{x}y\overline{z}}+G_{\overline{x}y\overline{z}}),  \\ 
(F_{x\overline{y}z}+G_{x\overline{y}z}) (F_{xy\overline{z}}+G_{xy\overline{z}}) &> (F_{xyz}+G_{xyz}) (F_{x\overline{y}\overline{z}}+G_{x\overline{y}\overline{z}}). 
\end{align*}
\end{lemma}

\begin{proof}
If all nine inequalities hold, then by Lemma \ref{basic} we get the three logical conclusions
\begin{multline} \label{M:1} F_{\overline{x}yz} G_{xyz}  >  F_{xyz} G_{\overline{x}yz} \ \ \text{and} \ \ F_{x \overline{y}z} G_{xyz}  <  F_{xyz} G_{x \overline{y}z} \quad \text{or} \quad \\
 F_{\overline{x}yz} G_{xyz}  <  F_{xyz} G_{\overline{x}yz} \ \ \text{and} \ \ F_{x \overline{y}z} G_{xyz} > F_{xyz} G_{x \overline{y}z},\end{multline}
\begin{multline}  \label{M:2} F_{\overline{x}yz} G_{xyz}  >  F_{xyz} G_{\overline{x}yz} \ \ \text{and} \ \ F_{xy \overline{z}} G_{xyz}  <  F_{xyz} G_{xy \overline{z}} \quad \text{or} \quad \\
 F_{\overline{x}yz} G_{xyz}  <  F_{xyz} G_{\overline{x}yz} \ \ \text{and} \ \ F_{xy \overline{z}} G_{xyz} > F_{xyz} G_{xy \overline{z}}, \end{multline} 
\begin{multline}  \label{M:3} F_{x\overline{y}z} G_{xyz}  >  F_{xyz} G_{x \overline{y}z} \ \ \text{and} \ \ F_{xy \overline{z}} G_{xyz}  <  F_{xyz} G_{xy \overline{z}} \text{ or } \\ F_{x \overline{y}z} G_{xyz}  <  F_{xyz} G_{x \overline{y}z} \ \ \text{and} \ \ F_{xy \overline{z}} G_{xyz} > F_{xyz} G_{xy \overline{z}}.\end{multline}  
Note that if the first conjunction in \eqref{M:1} is true, then the first conjunction of \eqref{M:3} cannot hold, so the second conjunction of \eqref{M:3} must be true. This in turn implies that the second conjunction of \eqref{M:2} is true, which is a direct contradiction with the first conjunction of \eqref{M:1}.  We get a similar contradiction if we assume the second conjunction of \eqref{M:1} to be true.
\end{proof}

From Lemma \ref{advanced} we can directly draw the following conclusions.

 \begin{corollary}
 There are no Simpson conversions from a triangulation of Type I or II to a triangulation of Type IV.  \qed
 \end{corollary}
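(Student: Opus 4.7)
The plan is to apply Lemma~\ref{advanced} directly, after locating, for each Type I or II triangulation $A$ and each Type IV triangulation $B$, a vertex $v$ of the cube that is full in $A$ and empty in $B$. Given such a vertex, I would suppose for contradiction that a Simpson conversion from $A$ to $B$ exists, witnessed by tables $F,G$ that both induce $A$ with $F+G$ inducing $B$. Since $A$ is full at $v$, the tables $F$ and $G$ each satisfy the six ``full vertex at $v$'' inequalities in the hypothesis of Lemma~\ref{advanced}. Since $B$ is empty at $v$, the sum $F+G$ satisfies the three ``empty vertex at $v$'' inequalities in the conclusion. This directly contradicts the lemma, so no such conversion can occur.

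The combinatorial heart of the corollary is therefore the claim that such a vertex $v$ always exists. I would establish this by inspection of Appendix~B, exploiting the structural dichotomy between the two triangulation classes. Each Type I or II triangulation has anti-parallel diagonals on opposite faces, which (as exemplified by Triangulation~3) couples the incidence status of each vertex across the three faces meeting at it and forces the full vertices to form one full parity class of the cube: four vertices with $x+y+z$ all even or all odd. Each Type IV triangulation has parallel diagonals on opposite faces and, by the explicitly stated property of Type IV, no full vertex; enumerating the $2^3=8$ parallel 2D diagonal configurations shows that the four that avoid producing a full vertex each have their empty vertices occurring as a single antipodal pair $\{v,\bar v\}$. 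Since any antipodal pair of cube vertices straddles both parity classes, at least one of the two empty vertices of $B$ lies in the parity class of the full vertices of $A$, and that vertex is the desired $v$.

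The main obstacle is the verification of the two structural claims above -- that the full vertices of a Type I or II triangulation occupy exactly one parity class, and that the empty vertices of a Type IV triangulation form an antipodal pair. Both reduce to finite case analyses over 2D diagonal patterns compatible with the defining ``anti-parallel'' and ``parallel'' conditions and can be read off directly from Appendix~B. Once these structural facts are in place, the corollary follows immediately from Lemma~\ref{advanced}, making the conclusion genuinely a direct consequence of the lemma as the phrasing of the corollary suggests.
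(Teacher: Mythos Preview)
Your proposal is correct and follows the same approach as the paper, which treats the corollary as an immediate consequence of Lemma~\ref{advanced} without writing out any details. You have simply made explicit the combinatorial check the paper leaves to the reader: that every Type~I or~II triangulation has its four full vertices occupying one parity class while every Type~IV triangulation has its two empty vertices forming an antipodal pair, guaranteeing a common vertex to which Lemma~\ref{advanced} applies.
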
 
 
  \begin{corollary}
 There are no Simpson conversion from a triangulation of Type VI to a triangulation of Type I or II. \qed
 \end{corollary}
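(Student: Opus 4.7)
My plan is to mirror the proof of the preceding corollary by invoking Lemma~\ref{advanced} at a single carefully chosen vertex. I would first extract from Appendix~B the relevant structural facts: in every triangulation of Type VI, the two full vertices are the endpoints of one of the four main diagonals of the cube, i.e.\ a complementary antipodal pair $\{v,\overline{v}\}$; and in every triangulation of Type I or Type II, the four empty vertices are exactly the corners of one of the two regular tetrahedra inscribed in the cube (the even-parity corner set or the odd-parity corner set).

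The key combinatorial observation is the elementary parity fact that the two endpoints of any main diagonal of the cube have opposite parities, and so they split one-into-each of the two inscribed tetrahedra. Consequently, for any Type VI triangulation $A$ with full vertices $\{v,\overline{v}\}$ and any Type I or II triangulation $B$ whose empty vertices form the inscribed tetrahedron $T$, precisely one of $v$ or $\overline{v}$ lies in $T$; call this vertex $(x,y,z)$.

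The corollary now falls out in one step. Suppose toward contradiction that $F$ and $G$ both induce $A$ and that $F+G$ induces $B$. Because $A$ has a full vertex at $(x,y,z)$, both $F$ and $G$ satisfy the six ``full-vertex'' hypothesis inequalities of Lemma~\ref{advanced} at $(x,y,z)$. Because $B$ has an empty vertex at $(x,y,z)$, the sum $F+G$ satisfies all three of the reversed ``empty-vertex'' inequalities appearing in the conclusion of that lemma -- which the lemma explicitly forbids. This contradiction completes the proof.

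The only real obstacle is bookkeeping: one must certify the two structural claims from Appendix~B, namely that every Type VI triangulation has its full vertices at an antipodal pair of the cube, and that every Type I or Type II triangulation has its empty vertices equal to a complete inscribed tetrahedron. Once these facts are in hand, the parity observation and the one-line appeal to Lemma~\ref{advanced} close the argument; there is no serious analytic content beyond that lemma.
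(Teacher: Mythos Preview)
Your proposal is correct and is precisely the argument the paper has in mind: the paper simply states that both corollaries follow ``directly'' from Lemma~\ref{advanced} and writes \qed, and what you have written is exactly the bookkeeping needed to make that direct deduction explicit --- identifying a vertex that is full in the Type~VI triangulation and empty in the Type~I/II triangulation, then invoking the lemma once.
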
 

There is a parity argument in the proof of Lemma \ref{advanced} that relies on the assumption that there are three face diagonals emanating from the vertex $xyz$. This can be extended to the case where there is exactly one face diagonal emanating from $xyz$.  In particular, if a table $F$ has exactly one diagonal incident with vertex $xyz$ and another table $G$ also has only that same face diagonal incident with vertex $xyz$, then the sum $F+G$ cannot have all three face diagonals on the sides incident with $xyz$ different from those in $F$ and $G$. This is the content of the next lemma.

\begin{lemma}\label{parity}
Let $F,G : \{0,1\}^3  \to \R_{+}$ and $(x,y,z) \in \{0,1\}^3 $.  If each of the following six inequalities hold \begin{align*} 
F_{\overline{x}yz} F_{x\overline{y}z} < F_{xyz} F_{\overline{x}\overline{y}z}, 
&\quad 
G_{\overline{x}yz} G_{x\overline{y}z} < G_{xyz} G_{\overline{x}\overline{y}z},  \\ 
F_{\overline{x}yz} F_{xy\overline{z}} > F_{xyz} F_{\overline{x}y\overline{z}}, 
&\quad 
G_{\overline{x}yz} G_{xy\overline{z}} > G_{xyz} G_{\overline{x}y\overline{z}}, \\ 
F_{x\overline{y}z} F_{xy\overline{z}} > F_{xyz} F_{x\overline{y}\overline{z}}, 
&\quad
G_{x\overline{y}z} G_{xy\overline{z}} > G_{xyz} G_{x\overline{y}\overline{z}},  \end{align*} 
then it is not possible for all three of the following three inequalities to hold
\begin{align*} 
(F_{\overline{x}yz}+G_{\overline{x}yz}) (F_{x\overline{y}z}+G_{x\overline{y}z}) &> (F_{xyz}+G_{xyz}) (F_{\overline{x}\overline{y}z}+G_{\overline{x}\overline{y}z}), \\
(F_{\overline{x}yz}+G_{\overline{x}yz}) (F_{xy\overline{z}}+G_{xy\overline{z}}) &< (F_{xyz}+G_{xyz}) (F_{\overline{x}y\overline{z}}+G_{\overline{x}y\overline{z}}),  \\ 
(F_{x\overline{y}z}+G_{x\overline{y}z}) (F_{xy\overline{z}}+G_{xy\overline{z}}) &< (F_{xyz}+G_{xyz}) (F_{x\overline{y}\overline{z}}+G_{x\overline{y}\overline{z}}). 
\end{align*}
\end{lemma}

\begin{proof}
Very similar to proof of Lemma \ref{advanced}.
\end{proof}

\subsection{Infeasible $3$-Dimensional Simpson Conversions}

We are now ready to describe the pairs of triangulations, $A$ and $B$, for which there are no Simpson conversions from $A$ to $B$.  For brevity, we will only list examples of such pairs up to symmetry.  There are $5476$ ordered pairs of triangulations, but only $167$ up to symmetry.\footnote{We calculated the number of distinct ordered pairs of triangulations of the cube up to symmetry by generating the list of all $74^2 = 5476$ ordered pairs and partitioning it into equivalence classes with respect to the relation that two pairs, $(A,B)$ and $(A',B')$, of triangulations of the cube are equivalent if there exists a symmetry of the cube that simultaneously maps $A$ to $A'$ and $B$ to $B'$.}  Among the $167$ symmetry classes of ordered pairs, $55$ satisfy the hypotheses of Lemmas \ref{advanced} and \ref {parity} and therefore cannot give rise to Simpson conversions.  Representatives from each of those symmetry classes of ordered pairs are listed in Table~\ref{T:diagonal1}.

\begin{table}[ht]
\caption{Representatives of the symmetry classes of pairs of triangulations, $A$ and $B$, for which there are no Simpson conversions from $A$ to $B$.}
\begin{tabular}{|c|c|}
\hline Triangulation A & Infeasible Triangulations B \\\hline
1 & 2, 7, 20, 35, 50 \\\hline
3 & 2, 7, 8, 20, 21, 23, 35, 36, 41, 50, 52, 62 \\\hline
11 & 2, 7, 8, 9, 22, 23, 24, 25, 30, 40, 41, 42, 43, 54, 55, 61, 62, 63, 64 \\\hline
35 &  \\\hline
47 & 2, 7, 8, 10, 14, 23, 27, 33, 41, 46, 55, 62, 70 \\\hline
71 & 1, 2, 4, 14, 41, 55 \\\hline
\end{tabular}
 \label{T:diagonal1}
 \end{table}
 
\begin{example} To see how the content of Table~\ref{T:diagonal1} can be used in a practical setting, suppose we measure the presence of a certain mutation at three places in a genome from several individuals and record the data.  We can determine which triangulation the data correspond to by computing the linear forms in Appendix~A. If we find that $e<0$, $f<0$, $j<0$, and $l<0$, then the association corresponds to Triangulation~35 in Appendix~B, which means we can conclude (directly from Table \ref{T:diagonal1}) that the set of individuals cannot be subdivided into two sets, both of which have association corresponding to Triangulation~1 or Triangulation~3. To get the full set of infeasible subdivisions we would also have to look at Triangulations~36 to 46, which are all symmetric to 35. For instance, because Triangulation~43 is a 90 degree rotation about the vertical axis of Triangulation 35, the set of individuals cannot be subdivided into two sets, each of which both have association corresponding to Triangulation~34, which is a 90 degree rotation (in the opposite direction) of Triangulation 11.

The easiest way to conduct this type of inference, however, is to identify empty vertices and apply Lemma~\ref{advanced}.  For instance, the empty vertex at ${111}$ in Triangulation 50 implies that the corresponding table cannot be the sum of any two subtables corresponding to a triangulation with a full vertex at ${111}$ (and similarly for vertex $000$).  One may also apply Lemma~\ref{parity} in a similar manner.
\end{example}

\subsection{Feasible $3$-Dimensional Simpson Conversions}\label{feasible}

We searched for explicit instances of Simpson conversions for each of the remaining $112$ symmetry classes by sampling $2 \times 2 \times 2$ contingency tables uniformly from the probability simplex in $\mathbb{R}^8$ using a method proposed in \cite[Section~2]{Pavlides-Perlman}.  For each representative pair of triangulations, $A$ and $B$, we randomly generated $2 \times 2 \times 2$ contingency tables until we found two that induce Triangulation $A$:  If the sum of those tables induced Triangulation $B$, we recorded the instance of Simpson conversion; otherwise, we repeated the process.  Following that approach, we found explicit instances for all $112$ cases.  The feasible Simpson conversions (up to symmetry) are listed in Table~\ref{T:diagonal2}.

\begin{table}[ht]
\caption{Representatives of the symmetry classes of pairs of triangulations, $A$ and $B$, for which there exists a Simpson conversion from $A$ to $B$.}
\begin{tabular}{|c|C{10.8cm}|}
\hline Triangulation A & Feasible Triangulations B \\\hline
1 & 1, 3, 11, 47, 71 \\\hline
3 & 1, 3, 4, 5, 9, 11, 12, 14, 47, 48, 49, 71, 72 \\\hline
11 & 1, 3, 4, 5, 11, 12, 13, 14, 15, 16, 19, 20, 21, 35, 36, 37, 47, 48, 49, 50, 52, 56, 57, 58, 71, 72, 73 \\\hline
35 & 1, 3, 4, 6, 11, 12, 14, 17, 18, 28, 29, 35, 36, 38, 39, 41, 44, 46, 47, 48, 51, 53, 54, 65, 67, 71, 72, 74 \\\hline
47 & 1, 3, 4, 6, 11, 12, 17, 18, 20, 21, 26, 28, 29, 32, 35, 36, 38, 39, 44, 47, 48, 50, 51, 52, 53, 59, 60, 65, 67, 68, 71, 72, 74 \\\hline
71 & 11, 12, 35, 36, 47, 48, 71, 72 \\\hline
\end{tabular}
 \label{T:diagonal2}
 \end{table}
 
Remarkably, Lemmas \ref{advanced} and \ref{parity} characterise all the cases for which no Simpson conversions can occur, as can be seen in Tables \ref{T:diagonal1} and \ref{T:diagonal2}.  We have thus established the following result.
 
 \begin{theorem}\label{thm:main}
Let $A$ and $B$ be triangulations of the cube.  There exists a Simpson conversion from $A$ to $B$ if and only if there is no vertex of the cube that is incident to an odd number of face diagonals in $A$ and the opposite set of face diagonals in $B$.   \qed
\end{theorem}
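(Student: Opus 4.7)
I would prove the theorem in two directions: the necessity (no Simpson conversion when a bad vertex exists) follows from the essential lemmas already established, while the sufficiency (a Simpson conversion exists otherwise) is handled by a finite enumeration aided by computation.

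For the necessity direction, I would first observe that each vertex $v$ of the cube is incident to three 2-dimensional diagonals (one on each of the three faces through $v$), so the number of diagonals passing through $v$ lies in $\{0,1,2,3\}$, and being incident to an ``odd number'' means $1$ or $3$. I would split into two cases accordingly. If $v$ is incident to all three of its face diagonals in $A$ and to none in $B$ (the opposite set when the count is $3$), then any pair $F,G$ both inducing $A$ satisfies the six hypotheses of Lemma \ref{advanced} at $v$, whereas $F+G$ inducing $B$ would satisfy the three inequalities the lemma forbids, a direct contradiction. If instead $v$ is incident to exactly one diagonal in $A$ and to the other two in $B$, then after a relabeling of the axes so that the single diagonal lies on the face normal to the $z$-axis, the analogous contradiction is obtained from Lemma \ref{parity}.

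For the sufficiency direction, the plan is as follows. I would first partition the $74^2 = 5476$ ordered pairs of triangulations $(A,B)$ into equivalence classes under the symmetry group of the cube acting diagonally, obtaining the $167$ symmetry classes. Using the vertex criterion just proved, I would isolate the classes containing no ``bad'' vertex; these should turn out to be exactly the $112$ classes of Table \ref{T:diagonal2}, the complement of the $55$ classes listed in Table \ref{T:diagonal1}. To complete the argument, I would exhibit, for each of the $112$ symmetry classes, an explicit witness pair $(F,G)$ of positive tables such that $F$ and $G$ both induce $A$ and $F+G$ induces $B$, sampling tables uniformly from the probability simplex in $\R^8$ via the method of \cite{Pavlides-Perlman} until a witness appears.

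The main obstacle is the sufficiency direction: there is no obvious uniform construction producing a witness $(F,G)$ for every non-obstructed pair, so the argument relies on a computational search, and one must sample enough to hit every feasible conversion type (including ``rare'' ones). A subtler point is verifying that Lemmas \ref{advanced} and \ref{parity} account for precisely the vertex obstructions described in the theorem, i.e.\ that the $55$ symmetry classes excluded by the lemmas coincide with those possessing a vertex of odd diagonal count in $A$ whose three diagonals all flip in $B$. Once this correspondence is confirmed and every non-obstructed class is realised by an explicit example, the two directions combine to prove the stated equivalence.
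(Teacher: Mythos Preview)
Your proposal is correct and follows essentially the same approach as the paper: the necessity direction is derived from Lemmas~\ref{advanced} and~\ref{parity} by splitting on whether the bad vertex has three or one incident diagonals in $A$, and the sufficiency direction is established by a computer search over the $112$ non-obstructed symmetry classes using uniform sampling from the probability simplex as in \cite{Pavlides-Perlman}. The paper proceeds in exactly this way, so there is nothing further to add.
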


The Python script used to generate the data in Tables \ref{T:diagonal1} and \ref{T:diagonal2} is available at: \begin{center} \texttt{http://www.mattstamps.com/simpson/supplementary.zip} \end{center} 
 
\section{Additional Computations \& Future Work}\label{S:amalga}

We conclude with a conjecture and some observations from additional computational experiments, along with some open-ended problems/questions that merit further exploration. 

\subsection{Frequency of Simpson Conversion}

In addition to there being many possible ways in which Simpson conversions can occur, Simpson conversions appear to occur somewhat frequently.  For the 2-dimensional case, \cite{Pavlides-Perlman} experimentally verified -- and presented a proof by Hadjicostas -- that the probability of a Simpson reversal occurring is $1/60$.  More precisely, they showed that if $$\left[p_{ijk} \ \Big| \ i, j, k = 0,1, \ p_{ijk} \geq 0, \text{ and } \sum p_{ijk} = 1\right]$$ is a random $2 \times 2 \times 2$ table (i.e., pairs of $2 \times 2$ tables) from the uniform distribution on the probability simplex in $\mathbb{R}^8$, then the probability that the $2 \times 2$ subtables $$[p_{ij0} \ | \ i, j = 0, 1] \quad \text{and} \quad [p_{ij1} \ | \ i, j = 0, 1]$$ both exhibit positive (or negative) associations while their sum $$[p_{ij0}+p_{ij1} \ | \ i, j = 0, 1]$$ exhibits a negative (resp. positive) association is $1/60$.

To estimate the analogous probability for the $3$-dimensional case, we sampled $2 \times 2 \times 2 \times 2$ tables (i.e., pairs of $2 \times 2 \times 2$ tables) of the form $$\left[p_{ijk\ell} \ \Big| \ i, j, k, \ell = 0, 1, \ p_{ijk\ell} \geq 0, \text{ and } \sum p_{ijk\ell} = 1\right]$$ uniformly from the probability simplex in $\mathbb{R}^{16}$ and calculated the proportion of tables for which the $2 \times 2 \times 2$ subtables $$[p_{ijk0} \ | \ i, j, k = 0,1] \quad \text{and} \quad [p_{ijk1} \ | \ i, j, k = 0,1] $$ induce the same triangulation of the cube (or, equivalently, satisfy identical sets of mutual, marginal, and conditional associations) while their sum $$[p_{ijk0}+p_{ijk1} \ | \ i, j, k = 0,1]$$ induces a different triangulation of the cube.

\begin{conjecture}
The probability that a Simpson conversion occurs in the context above is $1/450$.  
\end{conjecture}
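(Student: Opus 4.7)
The plan is to turn the conjectured probability into a volume computation on a product of simplices and then exploit the $48$-fold symmetry group of the cube together with Theorem~\ref{thm:main} to reduce it to a finite collection of explicit integrals.

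First I would rewrite the experiment. The uniform distribution on the probability simplex in $\mathbb{R}^{16}$ is $\mathrm{Dirichlet}(1,\ldots,1)$, so splitting the entries into the two groups $F_{ijk}=p_{ijk0}$ and $G_{ijk}=p_{ijk1}$ and letting $s=\sum_{ijk}F_{ijk}$ yields the independent decomposition $s\sim\mathrm{Beta}(8,8)$, with $\tilde F:=F/s$ and $\tilde G:=G/(1-s)$ each uniform on the open $7$-simplex $\Delta_7$. Since the triangulation induced by a $2\times2\times2$ table is scale-invariant and $F+G=s\tilde F+(1-s)\tilde G$ already sums to $1$, the conversion probability equals
\[
\sum_{A\ne B}\int_0^1\beta_{8,8}(s)\int_{R_A}\int_{R_A}\mathbf{1}\bigl[s\tilde F+(1-s)\tilde G\in R_B\bigr]\,d\tilde F\,d\tilde G\,ds,
\]
where $R_A\subseteq\Delta_7$ is the open region whose tables induce triangulation $A$ and $\beta_{8,8}$ is the Beta density. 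The target value is $1/450$.

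Second, I would quotient by symmetry. The cube's isometry group acts on $\Delta_7$, permutes the $74$ triangulations in the six orbits labelled Types I--VI of Appendix~B, and acts on ordered pairs of triangulations with exactly $167$ orbits, of which Theorem~\ref{thm:main} already excludes $55$. For each of the remaining $112$ orbits I would pick a canonical representative $(A,B)$, evaluate the inner double integral for that pair, and multiply by the orbit size, equal to $48$ divided by the order of the stabiliser of $(A,B)$. Summing gives the probability. To evaluate an individual orbit integral, I would mimic Hadjicostas's proof of the planar $1/60$ result: introduce a change of variables under which the relevant log-ratios become independent Beta or uniform factors, and try to reduce each $R_A$-indicator to a product of one-dimensional Beta integrals. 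A complementary Monte Carlo run of $10^{10}$ or more samples would give a numerical check of $1/450$ to many digits and identify which orbits carry the bulk of the probability.

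The main obstacle is the last step. In the planar case the triangulation is encoded by the sign of a single linear form, so Hadjicostas's change of variables is clean. In the cubic case each $R_A$ is a semialgebraic subset of $\Delta_7$ cut out by the sign pattern of the $20$ polynomial forms of Appendix~A (only four independent), mixing conditional, marginal, and mutual conditions, so its boundary is a union of high-degree algebraic surfaces and the constraint on $s\tilde F+(1-s)\tilde G$ is genuinely non-linear in $(\tilde F,\tilde G,s)$. I do not expect the $112$ orbit integrals to each have a tidy closed form; the fact that the grand sum conjecturally equals $1/450$, a suspiciously simple multiple of the planar $1/60$, suggests a global cancellation rather than a piecewise evaluation. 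The most promising avenue I can see is an Aomoto--Gelfand-style identity for hypergeometric integrals over the positive region of the hyperplane arrangement described in Section~4, or a probabilistic coupling pairing each cubic Simpson conversion with a combinatorial witness on a $2\times 2$ subtable. Discovering such a global argument is where the real work lies, and in its absence the best one can likely hope for in the short term is a rigorous sandwich by lower and upper bounds together with a high-precision numerical confirmation.
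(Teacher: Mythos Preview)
The paper does not prove this statement. It is explicitly labeled a \emph{Conjecture}, and the only support offered is Monte Carlo evidence: five million uniform samples from the probability simplex in $\mathbb{R}^{16}$, repeated one hundred times, yielding a $95\%$ confidence interval of $0.223\pm0.004\%$ for the conversion probability, which the authors then guess to be exactly $2/900=1/450$. There is no analytic argument in the paper to compare your proposal against.

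Your outline is a sensible first pass at what a proof might look like. The Dirichlet decomposition is correct: splitting the $\mathrm{Dirichlet}(1,\ldots,1)$ vector in $\mathbb{R}^{16}$ into two blocks of eight does give $s\sim\mathrm{Beta}(8,8)$ with $\tilde F,\tilde G$ independent and uniform on $\Delta_7$, and the induced triangulation is indeed scale-invariant because every linear form in Appendix~A has coefficient sum zero. Quotienting by the $48$-element symmetry group and restricting to the $112$ feasible orbits via Theorem~\ref{thm:main} is also the natural reduction. But, as you yourself say plainly, the heart of the matter --- evaluating or summing the $112$ orbit integrals to the closed form $1/450$ --- is left entirely open, and you offer only speculative pointers (an Aomoto--Gelfand identity, a coupling argument) with no concrete mechanism. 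So your proposal is not a proof either; it is a roadmap whose decisive step is missing. The conjecture remains open, and your write-up should be read as a discussion of possible approaches rather than a proof attempt.
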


For this estimation, we used the same Dirichlet distribution method as \cite[Section~2]{Pavlides-Perlman} for uniformly sampling the probability simplex in $\mathbb{R}^n$, namely by generating the entries of each table independently according to the $\text{Gamma}(1,1)$ distribution and normalizing.  We generated five million $2 \times 2 \times 2 \times 2$ contingency tables and recorded the percentage of tables that decomposed into two $2 \times 2 \times 2$ contingency tables that induced a common triangulation of the cube, the percentage of those tables that exhibited Simpson conversions, and the percentage of those tables that did not. We repeated this experiment one hundred times, which resulted in $95\%$ confidence intervals that $1.888 \pm 0.012\%$, $0.223 \pm 0.004\%$, and $1.664 \pm 0.012\%$, respectively, which led us to conjecture that the exact values are $17/900$, $2/900$, and $15/900$.  The Python script for the experiment is available at the website listed in Section~\ref{feasible}.

\subsection{Generalized Simpson Conversions}

We also considered the following, more general, version of the main question from Section~\ref{S:switch}:  For which triangulations $A$, $B$, and $C$ with $A \neq B$ is it possible for the sum of a contingency table that induces Triangulation A and a contingency table that induces Triangulation $B$ to induce Triangulation $C$?  Just as before, it is not difficult to find examples of such triples (for instance, in Table~\ref{all_data}, the contingency tables for the Northern, Southern, and all states induce Triangulations 19, 30, and 35, respectively), but Lemmas \ref{advanced} and \ref{parity} still imply that it is not possible to find such examples for every triple of triangulations.  The $199874$ triples of triangulations can be partitioned into $4655$ equivalence classes based on the symmetries of the cube.\footnote{We calculated the number of distinct triples of triangulations of the cube up to symmetry by generating the list of all $\binom{74}{2} \cdot 74 = 199874$ triples and partitioning it into equivalence classes with respect to the relation that a pair of triples, $(A,B,C)$ and $(A',B',C')$, of triangulations are equivalent if there exists a symmetry of the cube that simultaneously maps $A$ to $A'$, $B$ to $B'$, and $C$ to $C'$.}   Of those $4655$ symmetry classes, $351$ cannot occur because of Lemmas \ref{advanced} and \ref{parity}.  Using the same random search technique as the one described in Section~\ref{feasible}, we have found specific instances for 4287 the remaining 4304 cases, leaving 17 unaccounted triples.  Since it is not clear whether these remaining triples are infeasible (for reasons other than Lemmas \ref{advanced} and \ref{parity}) or simply very rare, we are continuing to search for specific instances and maintaining an up-to-date spreadsheet of known instances with the supplementary documents at: \begin{center} \texttt{http://www.mattstamps.com/simpson/feasible-triples.csv} \end{center}

\subsection{Generalization to Higher Dimensions and Further Exploration}

The geometric approach for studying associations among 3-way contingency tables presented in this paper raises a number of questions that merit further exploration.  For instance, what do the 74 non-convex regions in the 8-dimensional probability simplex that correspond to the triangulations of the cube (described in Section~\ref{S:corr}) look like?  How does their geometry relate to that of the regions cut out by the independence hypersurfaces corresponding to the various 3-way (conditional, marginal, and mutual) associations?  Are there formulas for the volumes of these regions that could shed light on the probability of a particular Simpson conversion occurring?  We thank the anonymous referees for suggesting that we state these questions explicitly.         

There is also the question of how the methods in this paper might be extended to higher dimensions.  %The approach in this paper for studying associations among three events could be generalized to four or more events. In the same way, association between four events could be thought of as a (regular) triangulation of the 4-dimensional cube. 
While a straightforward modification of the technique described in Sections~\ref{S:2D} and \ref{S:3D} allows one to map a triangulation of the $k$-dimensional cube to each binary $k$-way contingency table, it is not clear what correspondence should exist between multi-way associations and triangulations of high-dimensional cubes.  One could investigate the geometry of the log-linear hypersurfaces in $\mathbb{R}^{2^k}$ that separate the regions corresponding to different triangulations of the $k$-dimensional cube induced by binary $k$-way contingency tables independently of statistical implications, but the triangulations of high-dimensional cubes are not well understood in general.  For instance, the number of different triangulations of a cube grows very rapidly with respect to dimension, even though we only need to consider regular triangulations. It would thus be difficult to produce complete lists of feasible Simpson conversions in dimensions higher than 3. It could, however, be interesting to find generalizations of Lemmas~\ref{advanced} and \ref{parity}.

\section*{Acknowledgements} 
We are grateful to Lior Pachter for asking this question and Soo Go for several helpful suggestions that increased the efficiency and speed of our computations.  We would also like to thank the anonymous referees for the many comments and suggestions that helped us improve the overall quality of this article. This work was supported in part by grant 621-2014-4780 from the Swedish Science Council and National Science Foundation Grant \#1159206.

\bibliographystyle{apalike}
\bibliography{references}

\appendix

\section*{Appendix A: Linear Forms}

The following list of linear forms, taken from \cite{BPS}, is used to determine the triangulation of the cube induced by each $2 \times 2 \times 2$ contingency table.
\begin{align*}
a:=&f_{000}+f_{110}-f_{010}-f_{100}\\
b:=&f_{001}+f_{111}-f_{011}-f_{101}\\
c:=&f_{000}+f_{101}-f_{001}-f_{100}\\
d:=&f_{010}+f_{111}-f_{110}-f_{011}\\
e:=&f_{000}+f_{011}-f_{010}-f_{001}\\
f:=&f_{100}+f_{111}-f_{101}-f_{110}
\end{align*}
\begin{align*}
g:=&f_{000}+f_{111}-f_{011}-f_{100}\\
h:=&f_{001}+f_{110}-f_{010}-f_{101}\\
i:=&f_{000}+f_{111}-f_{010}-f_{101}\\
j:=&f_{001}+f_{110}-f_{011}-f_{100}\\
k:=&f_{000}+f_{111}-f_{001}-f_{110}\\
l:=&f_{010}+f_{101}-f_{011}-f_{100}
\end{align*}
\begin{align*}
m:=&f_{001}+f_{010}+f_{100}-f_{111}-2f_{000}\\
n:=&f_{110}+f_{101}+f_{011}-f_{000}-2f_{111}\\
o:=&f_{100}+f_{010}+f_{111}-f_{001}-2f_{110}\\
p:=&f_{011}+f_{101}+f_{000}-f_{110}-2f_{001}\\
q:=&f_{001}+f_{100}+f_{111}-f_{010}-2f_{101}\\
r:=&f_{110}+f_{011}+f_{000}-f_{101}-2f_{010}\\
s:=&f_{101}+f_{110}+f_{000}-f_{011}-2f_{100}\\
t:=&f_{010}+f_{001}+f_{111}-f_{100}-2f_{011}\\
\end{align*}

\section*{Appendix B: Triangulations}

The following chart comprises the 74 triangulations of the 3-dimensional cube. The numbering is taken from the paper \cite{BPS} for convenience, but we have chosen to list them in a slightly different order to enhance some similarities.\footnote{We discovered two errors in \cite[Table 5.1]{BPS}:  In the row beginning with 57/5, the 43 should be replaced with 42 and, in the row beginning with 63/5, the 42 should be replaced with 43.}  The diagonals on the surface of the cube are blue and the interior diagonals are red.

\section*{Type I}

\begin{center}

% [inline block 0: 19 envs, 93194 chars -> data_tex | \begin{tikzpicture}[scale=0.6] \put(-60,0) {\node(a1) at (5,2){\p};...]


\end{center}

\end{document}